\documentclass[11pt]{amsart}
\usepackage{geometry}                
\geometry{letterpaper}                
\usepackage{graphicx}
\usepackage{amssymb}
\usepackage{amsmath}
\usepackage{epstopdf}
\usepackage{enumerate}

\usepackage{tikz}
\usepackage{circuitikz}

\usepackage[authoryear,sort&compress]{natbib}
\bibpunct{[}{]}{;}{n}{,}{,}

\usepackage[all]{xy}
\DeclareGraphicsRule{.tif}{png}{.png}{`convert #1 `dirname #1`/`basename #1 .tif`.png}

\newtheorem{thm}{Theorem}

\newcommand{\pder}[2]{\frac{\partial #1}{\partial #2}}
\newcommand{\der}[2]{\frac{d #1}{d #2}}
\DeclareMathOperator*{\ext}{ext}
\DeclareMathOperator*{\cl}{cl}

\title{Constructing equivalence-preserving Dirac variational integrators with forces}
\author{Helen Parks}
\author{Melvin Leok}
\email{helen.parks@intel.com, mleok@math.ucsd.edu}
\address{Department of Mathematics, University of California at San Diego, La Jolla, CA 92093, USA}                                    

\begin{document}

\begin{abstract}
The dynamical motion of mechanical systems possesses underlying geometric structures, and preserving these structures in numerical integration improves the qualitative accuracy and reduces the long-time error of the simulation. For a single mechanical system, structure preservation can be achieved by adopting the variational integrator construction. This construction has been generalized to more complex systems involving forces or constraints as well as to the setting of Dirac mechanics. Variational integrators have recently been applied to interconnected systems~\cite{PaLe2017}, which are an important class of practically useful mechanical systems whose description in terms of Dirac structures and Dirac mechanical systems was elucidated in \cite{JaYo2014}. Since these interconnected systems are modeled as a collection of subsystems with forces of interconnection, we revisit some of the properties of forced variational integrators. In particular, we derive a class of Dirac variational integrators with forces that exhibit preservation properties that are critical when applying variational integrators to the discretization of interconnected Dirac systems. We close with a discussion of ongoing and future research based on these findings.
\end{abstract}

\maketitle

\section{Introduction}

Reframing analytic mechanics as geometric mechanics by considering mechanical systems as evolving on symplectic or Poisson manifolds reveals the depth of geometric structure in the physical world. Examining the range of geometric structures and their associated invariants---momenta, energy, symplectic forms, etc.---relevant to a particular mechanical motion provides a qualitative picture of the motion. Computational geometric mechanics allows these geometric structures to guide the development of numerical methods, leading to improved qualitative behavior of the numerical solutions.

Analytical mechanics and its geometric properties can be derived from variational or almost-variational principles. Variational integrators mimic this process by deriving integrators as the discrete evolution equations implied by a discrete variational principle. This process was first developed for discretizing Hamilton's principle in the case of conservative, nondegenerate, unconstrained Lagrangian systems. The discrete Hamilton's principle produces a set of discrete evolution equations known as the Discrete Euler--Lagrange (DEL) equations. Considered as a one-step map, the DEL equations are symplectic, approximately energy conserving, and satisfy a discrete Noether's theorem. This mirrors the symplecticity, energy conservation, and momentum conservation at the continuous level.

The variational construction has been extended to accommodate forcing, holonomic constraints \cite{MaWe2001}, and nonholonomic constraints \cite{CoMa2001}. Most recently, the variational construction was applied to Lagrange--Dirac mechanical systems, which is the Lagrangian view of Dirac mechanics \cite{LeOh2011}. Dirac mechanics generalizes both Lagrangian and Hamiltonian mechanics to a formulation that accommodates forces and constraints as well as degeneracy. Forces, constraints, and degeneracy each change the geometric structure of the system away from straightforward conservation laws, and Dirac geometry and mechanics elucidates the specifics of these changes \cite{YoMa2006a, YoMa2006b}. 

Dirac mechanics was also recently shown to be a useful setting for the study of interconnected systems \cite{JaYo2014}. Interconnected systems consist of multiple mechanical components joined together to operate as a whole. Many practical engineering systems can be conceptualized this way, and often it is more natural to model each component individually than to attempt to model the entire large-scale system monolithically. In \cite{PaLe2017}, we developed the foundations for incorporating the ease of subsystem-level modeling to geometric integration through an interconnected discrete Dirac simulation framework. That framework is based on incorporating the ideas of \cite{JaYo2014} into the discrete foundation developed in \cite{LeOh2011}.

This paper takes the first step by extending Dirac variational integrators to a formulation that includes discrete forcing. Connected subsystems exert interaction forces on one another, so accommodating forces may prove useful in interconnected applications. We can also view the interconnected modeling process as breaking a large (possibly conservative) system into smaller, more manageable parts. From this point of view, interaction forces are artificial modeling constructs, and we should ensure that they cancel properly in the final integration, so as to ensure that the large monolithic model is equivalent to the model based on interconnecting the component subsystems. This paper addresses that concern by examining notions of equivalence at both the continuous and discrete levels and providing a criterion for defining forced variational or Dirac integrators which are well defined with respect to changes in representation of equivalent systems.

The paper begins with a review of classic, forced, and Dirac variational integrators in Section 2. Section 3 examines order of accuracy and equivalence preservation in forced variational integrators, including several numerical examples. This study informs the later design and implementation of forced Dirac variational integrators. Section 4 describes the extension of Dirac variational integrators to forced Dirac variational integrators, closing with a numerical example. Section 5 summarizes the implications of this work and suggests future research directions.

\section{Background: A review of variational integrators}

\subsection{Classic variational integrators}

Consider a conservative, unconstrained, nondegenerate Lagrangian system. Given the Lagrangian $L:TQ\rightarrow \mathbb{R}$, we can construct a discrete Lagrangian $L_d:Q\times Q\rightarrow\mathbb{R}$, which can be viewed as approximating the Type I generating function for the symplectic time-$h$ flow map of the Euler--Lagrange vector field. With this, variational integrators are constructed from the discrete Hamilton's principle, which states that the discrete action sum is stationary,
\begin{equation*}
\delta \sum_{k=0}^{N-1} L_d(q_k,q_{k+1}) = 0,
\end{equation*}
for fixed endpoints, $\delta q_0 = \delta q_N = 0$.
Computing the infinitesimal variation of the discrete action sum, and collecting the terms involving $\delta q_k$ yields the discrete Euler--Lagrange equations,
\begin{equation*}
D_2L_d(q_{k-1},q_k) + D_1L_d(q_k,q_{k+1}) = 0.
\end{equation*}
Here, $D_1L_d$ and $D_2L_d$ denote the derivative of the discrete Lagrangian $L_d$ with respect its first and second argument, respectively. We view $Q\times Q$ as the discrete analog of the tangent bundle $TQ$ and the discrete Euler--Lagrange equations as defining a one-step map from $Q\times Q$ to itself that determines $(q_k,q_{k+1})$ from $(q_{k-1},q_k)$. 

One can also define a discrete flow along $T^*Q$ by introducing two discrete Legendre transforms, $\mathbb{F}^\pm L_d:Q\times Q\rightarrow T^*Q$,
\begin{align*}
\mathbb{F}^+L_d(q_k,q_{k+1}) &= (q_{k+1}, D_2L_d(q_k,q_{k+1})), \\
\mathbb{F}^-L_d(q_k,q_{k+1}) &= (q_k, -D_1L_d(q_k,q_{k+1})).
\end{align*}
These give two possible definitions for the momentum at the point $q_k$, which are defined in terms of the data $(q_k,q_{k+1})$ and $(q_{k-1},q_k)$,
\begin{align*}
p_{k,k+1}^- &= \mathbb{F}^-L_d(q_k,q_{k+1}) = -D_1L_d(q_k,q_{k+1}), \\
p_{k-1,k}^+ &= \mathbb{F}^+L_d(q_{k-1},q_{k}) = D_2L_d(q_{k-1},q_k).
\end{align*}
Then, the DEL equations enforce a momentum matching condition along the discrete flow, i.e.,
\begin{equation*}
p_{k-1,k}^+ = p_{k,k+1}^-.
\end{equation*}
Thus, the momentum $p_k = p_{k-1,k}^+ = p_{k,k+1}^-$ is well-defined along solutions of the DEL equations, and we can define a one-step map in phase space by
\begin{align*}
p_{k+1} &= D_2L_d(q_k,q_{k+1}), \\
p_k &= -D_1L_d(q_k,q_{k+1}).
\end{align*}
This map is called the discrete Hamiltonian map corresponding to the DEL equations.

The discrete Euler--Lagrange equations and discrete Hamiltonian map each preserve the appropriate symplectic form and satisfy a discrete Noether's theorem, so that symmetries in the discrete Lagrangian $L_d$ result in conservation of the component of the discrete momenta in the direction of the infinitesimal generators of the symmetry. As symplectic integrators, they also approximately conserve energy over exponentially long time scales. These conservation properties hold and ensure that the qualitative behavior of the Lagrangian system is well-approximated for any choice of $L_d$.

The choice of $L_d$ is guided by the existence of an exact discrete Lagrangian, $L_d^E$, which yields a discrete Hamiltonian map that samples the exact Hamiltonian flow. For points $q_0,q_1\in Q$, and $h$ sufficiently small, let $q_{01}(t)$ be the unique curve such that $q_{01}(0) = q_0, \  q_{01}(h) = q_1$, and $q_{01}(t)$ satisfies the Euler--Lagrange equations for $L$ on $[0,h]$. Then $L_d^E$ is given by
\begin{equation*}
L_d^E(q_0,q_1;h) = \int_0^h L(q_{01}(t),\dot{q}_{01}(t)) \ dt.
\end{equation*}
This is related to Jacobi's solution of the Hamilton--Jacobi equation, and can viewed as evaluating the action integral on the solution of an Euler--Lagrange boundary-value problem, or it can equivalently be described variationally,
\begin{equation*}
L_d^E(q_0, q_1;h)= \ext_{\substack{q\in C^2([0,h],Q) \\ q(0)=q_0, q(h)=q_1}} \int_0^h L(q(t), \dot q(t)) dt.
\end{equation*}
One can approximate the exact discrete Lagrangian $L_d^E$ by incorporating a wide variety of standard numerical techniques in order to obtain a computable discrete Lagrangian $L_d$ \cite{LeSh2011b, HaLe2012}. By the variational error analysis described in  \cite{MaWe2001}, it can be shown that if the discrete Lagrangian $L_d(q_0,q_1;h)=L_d^E(q_0,q_1;h)+\mathcal{O}(h^{r+1})$, then the associated discrete Hamiltonian map is an order $r$ approximation of the exact flow map.

\subsection{Forced variational integrators}

Given a continuous force $f_L:TQ\rightarrow T^*Q$ that is a fiber-preserving map over the identity, we introduce discrete forces $f_d^\pm:Q\times Q\rightarrow T^*Q$ that approximate the virtual work of the forces. Then, the forced discrete Euler--Lagrange equations are derived from the discrete Lagrange--d'Alembert principle,
\begin{equation}
\delta\sum_{k = 0}^{N-1}L_d(q_k, q_{k+1}) + \sum_{k=0}^{N-1}[f_d^-(q_k, q_{k+1})\cdot \delta q_k + f_d^+(q_k, q_{k+1})\cdot \delta q_{k+1}] = 0,
\label{forcedprinc}
\end{equation}
where as before, the endpoints are fixed, $\delta q_0 = \delta q_N = 0$. The forced discrete Euler--Lagrange equations are given by
\begin{equation*}
D_2L_d(q_{k-1},q_k)+f_d^+(q_{k-1},q_k)+D_1L_d(q_k,q_{k+1})+f_d^-(q_k,q_{k+1}) = 0.
\end{equation*}
The discrete Legendre transforms and discrete Hamiltonian map now incorporate the contribution of the discrete forces,
\begin{subequations}\label{discH_forced}
\begin{align}
p_{k+1} = \mathbb{F}^{f+}L_d(q_k,q_{k+1}) &= D_2L_d(q_k,q_{k+1})+f_d^+(q_k,q_{k+1}), \\
p_k = \mathbb{F}^{f-}L_d(q_k,q_{k+1}) &= -D_1L_d(q_k,q_{k+1})-f_d^-(q_k,q_{k+1}).
\end{align}
\end{subequations}
With these momentum definitions, the forced DEL equations can again be viewed as a momentum matching condition.

These equations reduce to the DEL equations in the absence of forcing, and they satisfy a forced discrete Noether's theorem for symmetries of the discrete Lagrangian where the discrete forces do no work in the direction of the  infinitesimal generators of the symmetry. Forced variational integrators exhibit better energy behavior than non-geometric integrators in practice, in the sense that the discrete energy evolution better reflects the exact energy evolution of the system. This lacks rigorous explanation since the forced equations are no longer symplectic nor energy preserving at either the continuous or the discrete level.

Once again, exact discrete quantities guide both the error analysis and the practical choice of $L_d$ and $f_d^\pm$. For points $q_0,q_1\in Q$, define $q_{01}(t)$ as above, except that we now require $q_{01}(t)$ to satisfy the \emph{forced} Euler--Lagrange equations,
\begin{equation*}
\pder{L}{q}(q,\dot{q}) - \der{}{t}\left(\pder{L}{\dot{q}}(q,\dot{q})\right) + f_L(q,\dot{q}) = 0,
\end{equation*}
for $L$ and $f_L$ on $[0,h]$. Then, $L_d^E$ and $f_d^{E\pm}$ are given by
\begin{align*}
L_d^E(q_0,q_1,h) &= \int_0^h L(q_{01}(t),\dot{q}_{01}(t)) \ dt,\\
f_d^{E+}(q_0,q_1,h) &= \int_0^h f_L(q_{01}(t),\dot{q}_{01}(t)) \cdot \frac{\partial q_{01}(t)}{\partial q_1} \ dt, \\
f_d^{E-}(q_0,q_1,h) &= \int_0^h f_L(q_{01}(t),\dot{q}_{01}(t)) \cdot \frac{\partial q_{01}(t)}{\partial q_0} \ dt.
\end{align*}
We obtain computable $L_d$ and $f_d^\pm$ by approximating the curve $q_{01}(t)$ that satisfies the forced Euler--Lagrange boundary-value problem and the integrals that arise in the definition of $L_d^E, f_d^{E\pm}$. Again, the orders of these approximations determine the order of accuracy of the discrete Hamiltonian map. This fact is stated without proof in \cite{MaWe2001}, and we provide an explicit proof in the following section.

\subsection{Dirac variational integrators}

Dirac variational integrators were developed in \cite{LeOh2011} and generalize the variational integrator construction to the case of Dirac mechanics. In \cite{LeOh2011}, the authors develop both the variational theory of discrete Dirac mechanics and explicit discrete analogues of Dirac structures. The authors arrive at two formulations, (+) and ($-$)-discrete Dirac mechanics, stemming from a choice of generating function used to define the discrete analogue of the symplectic flat map, $\Omega_{d\pm}^\flat$.

Discretization begins from a continuous problem described by a Lagrangian function $L:TQ\rightarrow\mathbb{R}$ and a continuous constraint distribution $\Delta_Q \subset TQ$. The constraint distribution is determined by its annihilator, which we write as $\Delta_Q^\circ = \text{span}\{\omega^a\}_{a=1}^m \subset T^*Q$. In each formulation of discrete Dirac mechanics, we construct a discrete Lagrangian $L_d$ and discrete annihilating one-forms $\omega_{d\pm}^a$ based on the continuous problem and a choice of retraction $\mathcal{R}:TQ\to Q$.

\subsubsection{(+)-discrete Dirac mechanics}
In (+)-discrete Dirac mechanics, we define the discrete one-forms as
\begin{equation*}
\omega_{d+}^a(q_k,q_{k+1}) = \omega^a(q_k,\mathcal{R}_{q_k}^{-1}(q_{k+1})).
\end{equation*}
These then define a discrete distribution as follows
\begin{equation*}
\Delta_Q^{d+} = \{(q_k,q_{k+1})\in Q\times Q \ | \ \omega_{d+}^a(q_k,q_{k+1}) = 0, \ a = 1,\dots,m\}.
\end{equation*}
The (+)-discrete Lagrange--d'Alembert--Pontryagin principle is
\begin{equation*}
\delta\sum_{k=0}^{N-1}[L_d(q_k,q_k^+)+p_{k+1}(q_{k+1}-q_k^+)] = 0,
\end{equation*}
with variations that vanish at the endpoints, $\delta q_0 = \delta q_N = 0$, and discrete constraints $(q_k,q_{k+1}) \in \Delta_Q^{d+}$. We also impose a constraint on the variations $\delta q_k \in \Delta_Q(q_k)$ after computing variations inside the sum. The variable $q_k^+$ serves as the discrete analog to the introduction of $v$ in the continuous principle. This process produces the (+)-discrete Lagrange--Dirac equations of motion,
\begin{align*}
0 &= \omega_{d+}^a(q_k, q_{k+1}), \\
q_{k+1} &= q_k^+ \\
p_{k+1} &= D_2L_d(q_k,q_k^+), \\ 
p_k  &= - D_1L_d(q_k,q_k^+) + \mu_a\omega^a(q_k),
\end{align*}
where $a = 1,\dots,m$, and the last equation uses the Einstein summation convention. These equations simplify to the DEL equations in the unconstrained case, and they recover the nonholonomic integrators of \cite{CoMa2001}. They can also be expressed in terms of discrete Dirac structures as
\begin{equation}
(X_d^k, \mathfrak{D}^+L_d(q_k,q_k^+))\in D_{\Delta_Q}^{d+}.
\label{pDiscreteDiracStructFormulation}
\end{equation}
where $X_d^k = ((q_k,p_k),(q_{k+1},p_{k+1}))$ is the discrete vector field, $\mathfrak{D}^+L_d$ is the (+)-discrete Dirac differential, and $D_{\Delta_Q}^{d+}$ is the (+)-discrete Dirac structure induced by the continuous distribution $\Delta_Q$. The discrete symplectic flat map, $\Omega_{d+}^\flat$, contributes to the definition of $D_{\Delta_Q}^{d+}$. We show \eqref{pDiscreteDiracStructFormulation} now to highlight its similarity to the continuous expression for constrained Dirac mechanics, $(X,\mathfrak{D}L)\in D_{\Delta_Q}$. We provide explicit descriptions of the discrete objects as needed in the development of forced Dirac integrators below.

\subsubsection{($-$)-discrete Dirac mechanics} This formulation defines the discrete one-forms as
\begin{equation*}
\omega_{d-}^a(q_{k},q_{k+1}) = \omega^a(q_{k+1},\mathcal{R}_{q_{k+1}}^{-1}(q_k))
\end{equation*}
and uses the ($-$)-discrete Lagrange-d'Alembert-Pontryagin principle
\begin{equation*}
\delta\sum_{k=0}^{N-1} [L_d(q_{k+1}^-,q_{k+1}) - p_k(q_k-q_{k+1}^-)] = 0 \ .
\end{equation*}
We impose the same constraints on the variations of the discrete principle as in the (+) case, using $\Delta_Q^{d-}$ defined from $\{\omega_{d-}^a\}_{a=1}^m$ instead of $\Delta_Q^{d+}$. The variable $q_k^-$ now plays the role of the discrete velocity. This yields the ($-$)-discrete Lagrange--Dirac equations
\begin{align*}
0&=\omega_{d-}^a(q_k, q_{k+1}),\\ 
q_{k} &= q_{k+1}^- \\
p_{k} &= -D_1L_d(q_{k+1}^-,q_{k+1}), \\
p_{k+1}&= D_2L_d(q_{k+1}^-,q_{k+1}) + \mu_a\omega^a(q_{k+1}).
\end{align*}
Equivalently,
\begin{equation*}
(X_d^k,\mathfrak{D}^-L_d(q_{k+1}^-,q_{k+1}))\in D_{\Delta_Q}^{d-}.
\end{equation*}
Again, we avoid defining each discrete object until necessary, and the interested reader is referred to \cite{LeOh2011} for the details.

\section{Forced Integrators revisited}

In \cite{MaWe2001}, the authors assert without proof the existence of a theorem relating the order of a forced variational integrator to the order of approximation of $L_d, f_d^\pm$ to $L_d^E, f_d^{E\pm}$. Here, we provide an explicit proof of the most practical direction of that theorem and a recipe for constructing forced integrators of known order. We then analyze the conditions under which this process yields a well-defined, equivalence-preserving integrator and discuss the implications of equivalence-preservation in interconnected applications. The section closes with several numerical examples illustrating both our construction process and the consequences of equivalence-preservation.

\subsection{Determining the order of a forced variational integrator}
Theorem \ref{orderthm} below explicitly proves that the orders of approximation of $L_d$ and $f_d^\pm$ to $L_d^E$ and $f_d^{E\pm}$ determine the order of accuracy of the discrete Hamiltonian map they define. This gives the most useful direction of the order theorem mentioned in \cite{MaWe2001}, since the order of $L_d$ and $f_d^\pm$ are easier to calculate than the order of the integrator.

We need a few preliminary definitions before we state the theorem. First, we explicitly define what we mean by the order of $L_d, f_d^{\pm}$ and $\mathbb{F}^{f\pm}L_d$, using the same definitions as in \cite{MaWe2001}. Thus, a given $L_d$ is of order $r$ if there exist constants $C_L>0$, $h_L>0$ and an open subset $U_L\subset TQ$ with compact closure such that
\begin{equation*}
||L_d(q(0),q(h),h) - L_d^E(q(0),q(h),h)|| \leq C_Lh^{r+1},
\end{equation*}
for all solutions $q(t)$ of the forced Euler--Lagrange equations with initial conditions satisfying $(q(0),\dot{q}(0)) \in U_L$, and for all $h\leq h_L$. Similarly, $f_d^\pm$ are of order $r$ if there exist constants $C_{f\pm}>0$, $h_{f\pm}>0$ and open subsets $U_{f\pm}\subset TQ$ with compact closure such that
\begin{equation*}
||f_d^\pm(q(0),q(h),h) - f_d^{E\pm}(q(0),q(h),h)|| \leq C_{f\pm} h_{f\pm}^{r+1},
\end{equation*}
for all solutions $q(t)$ of the forced Euler--Lagrange equations with initial conditions satisfying $(q(0),\dot{q}(0)) \in U_f^\pm$, and for all $h\leq h_f^\pm$. We will say that $L_d, f_d^\pm$ are \emph{simultaneously of order $r$} if there exists $U \subset U_L\cap U_f^+\cap U_f^-$ such that $U$ is a nontrivial open set with compact closure.

For the discrete Legendre transforms to be of order $r$, we require constants $C^\pm>0, h^\pm>0$ and an open set $U^\pm\subset TQ$ such that
\begin{equation*}
||\mathbb{F}^{f\pm}L_d(q(0),q(h),h)-\mathbb{F}^{f\pm}L_d^{E}(q(0),q(h),h)|| \leq C^\pm(h^\pm)^{r+1}
\end{equation*}
for all solutions $q(t)$ of the forced Euler--Lagrange equations with initial conditions satisfying $(q(0),\dot{q}(0)) \in U^\pm$, and for all $h\leq h^\pm$.

\begin{thm}
Consider a hyperregular Lagrangian $L$ with force $f_L$, corresponding Hamiltonian $H$, and corresponding Hamiltonian force $f_H$. If $L_d$ and $f_d^\pm$ are simultaneously of order $r$, then
\begin{enumerate}[a.]
\item the forced discrete Legendre transforms $\mathbb{F}^{f\pm}L_d$ are of order $r$.
\item the discrete Hamiltonian map is of order $r$.
\end{enumerate}
\label{orderthm}
\end{thm}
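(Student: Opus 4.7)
The plan is to adapt Marsden and West's variational error analysis to the forced setting, proving (a) first and then using it to derive (b). Part (a) hinges on showing that the hypothesized $O(h^{r+1})$ bounds on $L_d - L_d^E$ and $f_d^\pm - f_d^{E\pm}$ lift to the same bounds on their $q_0$- and $q_1$-derivatives. Part (b) then compares the one-step update $(q_1, p_1)$ to the exact time-$h$ forced Hamiltonian flow, which is itself encoded by $L_d^E$ and $f_d^{E\pm}$, using the Legendre transform estimate from (a).

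\emph{Part (a).} On the simultaneous neighborhood $U$ supplied by the hypothesis, solutions $q_{01}(t)$ of the forced boundary-value problem depend smoothly on $(q_0, \dot{q}_0, h)$, and the map $(q_0, \dot{q}_0, h) \mapsto (q_0, q_{01}(h), h)$ is a local diffeomorphism onto a neighborhood of the diagonal in $Q \times Q \times \mathbb{R}$. This lets us view $L_d - L_d^E$ and $f_d^\pm - f_d^{E\pm}$ as smooth functions of $(q_0, q_1, h)$ bounded by $O(h^{r+1})$ uniformly on a neighborhood. Since the bound is uniform in $(q_0, q_1)$, all $h$-derivatives at $h=0$ up to order $r$ must vanish, so Taylor's theorem factors $L_d - L_d^E = h^{r+1} R_L(q_0, q_1, h)$ with $R_L$ smooth and bounded, and similarly for the forces. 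Differentiating in $q_0$ or $q_1$ preserves the $h^{r+1}$ prefactor, so by \eqref{discH_forced} we obtain $\mathbb{F}^{f\pm}L_d - \mathbb{F}^{f\pm}L_d^E = O(h^{r+1})$.

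\emph{Part (b).} Let $(q_1^E, p_1^E)$ denote the image of $(q_0, p_0)$ under the exact time-$h$ forced Hamiltonian flow. By construction of the exact discrete quantities via the forced BVP, $p_0 = \mathbb{F}^{f-}L_d^E(q_0, q_1^E, h)$ and $p_1^E = \mathbb{F}^{f+}L_d^E(q_0, q_1^E, h)$, whereas the computed iterate $(q_1, p_1)$ satisfies $p_0 = \mathbb{F}^{f-}L_d(q_0, q_1, h)$ and $p_1 = \mathbb{F}^{f+}L_d(q_0, q_1, h)$. Equating the two expressions for $p_0$ and invoking (a) yields
\[
\mathbb{F}^{f-}L_d^E(q_0, q_1, h) - \mathbb{F}^{f-}L_d^E(q_0, q_1^E, h) = O(h^{r+1}).
\]
Because $(q_1 - q_0)/h$ is the leading-order initial velocity in the BVP and hyperregularity provides an invertible mass matrix, $\partial_{q_1} \mathbb{F}^{f-}L_d^E$ is asymptotic to $M(q_0)/h$ as $h \to 0$, and the implicit function theorem gives $q_1 - q_1^E = O(h^{r+2})$. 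Writing
\[
p_1 - p_1^E = \bigl[\mathbb{F}^{f+}L_d(q_0, q_1) - \mathbb{F}^{f+}L_d^E(q_0, q_1)\bigr] + \bigl[\mathbb{F}^{f+}L_d^E(q_0, q_1) - \mathbb{F}^{f+}L_d^E(q_0, q_1^E)\bigr],
\]
the first bracket is $O(h^{r+1})$ by (a) and the second is $O(1/h) \cdot O(h^{r+2}) = O(h^{r+1})$ by the analogous scaling of $\partial_{q_1} \mathbb{F}^{f+}L_d^E$, giving the order-$r$ estimate for the discrete Hamiltonian map.

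\emph{Main obstacle.} The delicate step is the derivative-preservation argument in part (a): the hypothesis controls $L_d - L_d^E$ only along actual BVP solutions, and lifting this to bounds on $D_i L_d - D_i L_d^E$ requires using the local diffeomorphism between $(q_0, \dot{q}_0, h)$ and $(q_0, q_1, h)$ together with Taylor expansion in $h$ to cleanly factor out the $h^{r+1}$ prefactor. The forced setting further demands that this change of variables be compatible with the forced BVP and that the three bounds on $L_d, f_d^+, f_d^-$ hold on a common neighborhood, which is precisely the role played by the ``simultaneously of order $r$'' clause in the hypothesis. The $O(1/h)$ scaling of $\partial_{q_1} \mathbb{F}^{f\pm} L_d^E$ is the other quantitative input that must be justified carefully to extract the full order-$r$ conclusion from the Legendre transform estimate.
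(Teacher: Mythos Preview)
Your proposal follows essentially the same strategy as the paper's proof: for part (a) you write $L_d - L_d^E = h^{r+1}\cdot(\text{bounded remainder})$ and differentiate, which is exactly what the paper does (it calls the remainder $e$ and argues $D_2e$ is bounded by smoothness and compactness of $\cl(U)$); for part (b) you use the factorization $\tilde{F}^f_{L_d} = \mathbb{F}^{f+}L_d\circ(\mathbb{F}^{f-}L_d)^{-1}$ and compare to the exact version, which is precisely the Marsden--West argument the paper invokes by reference rather than reproducing. The main difference is one of explicitness: the paper establishes the forced commutative diagram \eqref{three_triangles} and then defers the rest of (b) to \cite{MaWe2001}, whereas you spell out the implicit-function step with the $O(1/h)$ Jacobian scaling, and you are more candid than the paper about the delicacy of lifting the $C^0$ error bound on $L_d - L_d^E$ to its $q_0,q_1$-derivatives in (a).
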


\begin{proof}
a. Assume that $L_d$ and $f_d^\pm$ are simultaneously of order $r$. Then $L_d$ of order $r$ implies existences of a function $e$ such that
\begin{equation}
L_d(q(0), q(h), h) = L_d^E(q(0),q(h),h) + h^{r+1}e(q(0),q(h),h)
\label{ld_order_r}
\end{equation}
and $||e(q(0),q(h),h)||\leq C_L$ on $U$. Similarly, $f_d^+$ of order $r$ implies the existence of a function $e^+$ such that
\begin{equation}
f_d^+(q(0), q(h), h) = f_d^{E+}(q(0),q(h),h) + h^{r+1}e^+(q(0),q(h),h)
\label{fdplus_order_r}
\end{equation}
and $||e^+(q(0),q(h),h)||\leq C_{f+}$ on $U$.

Taking derivatives of \eqref{ld_order_r} with respect to $q(h)$ gives
\begin{equation*}
D_2L_d(q(0),q(h),h) = D_2L_d^E(q(0), q(h),h) + h^{r+1} D_2e(q(0),q(h),h) \ .
\end{equation*}
We assume both $L_d$ and $L_d^E$ have continuous derivatives in $q(h)$, so, by the definition of $e$ in \eqref{ld_order_r}, $D_2e$ is continuous and bounded on the compact set $\cl (U)$. Combining this with \eqref{fdplus_order_r}, we have
\begin{multline*}
D_2L_d(q(0),q(h),h) + f_d^+(q(0), q(h), h)\\
 = D_2L_d^E(q(0), q(h),h) + f_d^{E+}(q(0),q(h),h) \\
+ h^{r+1}[D_2e(q(0),q(h),h) + e^+(q(0),q(h),h)] \ .
\end{multline*}
From the above arguments, $D_2e(q(0),q(h),h) + e^+(q(0),q(h),h)$ is bounded on the set $\cl (U)$. Thus, $\mathbb{F}^{f+}L_d$ is of order $r$. Taking derivatives of $L_d$ with respect to $q(0)$ and using a similar calculation with $f_d^-$ shows that $\mathbb{F}^{f-}L_d$ is of order $r$.
\\\\
\noindent
b. Let $F^f_{L_d}$ denote the integrator defined by the forced discrete Euler Lagrange equations and $\tilde{F}^f_{L_d}$ denote the discrete Hamiltonian integrator defined by the forced discrete Legendre transforms in \eqref{discH_forced}. Then, from the definitions of $F_{L_d}, \tilde{F}_{L_d}$, and $\mathbb{F}^{f\pm}L_d$, we have the  commutative diagram \eqref{three_triangles}, which is the forced equivalent of diagram (1.5.3) in \cite{MaWe2001}.
\begin{equation}
\xymatrix{
 & (q_0,q_1) \ar[ldd]_(0.59){\mathbb{F}^{f-}L_d} \ar[rr]^{F^f_{L_d}} \ar[rdd]_(0.405){\mathbb{F}^{f+}L_d} & & (q_1,q_2) \ar[ldd]^(0.405){\mathbb{F}^{f-}L_d} \ar[rdd]^(0.59){\mathbb{F}^{f+}L_d}& \\
 & & & & \\
(q_0,p_0)\ar[rr]_{\tilde{F}^f_{L_d}} &  & (q_1,p_1) \ar[rr]_{\tilde{F}^f_{L_d}} & & (q_2,p_2)
}
\label{three_triangles}
\end{equation}
Thus, we can express the forced discrete Hamiltonian map as $\tilde{F}_{L_d}^f = \mathbb{F}^{f+}L_d \circ (\mathbb{F}^{f-}L_d)^{-1}$, directly analogous to the unforced discrete Hamiltonian map, $\tilde{F}_{L_d} = \mathbb{F}^{+}L_d \circ (\mathbb{F}^{-}L_d)^{-1}$. As such, the proof of b. from a. given in \cite{MaWe2001} carries over exactly, and we choose not to reproduce it here.
\end{proof}

The next theorem provides a recipe for constructing forced triples $(L_d, f_d^\pm)$ of known order.

\begin{thm}
Consider a $p^{th}$ order accurate boundary-value solution method and a $q^{th}$ order accurate quadrature formula. Assume both the Lagrangian and the forcing function are Lipschitz continuous in both variables. Use the boundary-value solution method to obtain approximations $(q^{i},v^{i}) \approx (q(c_ih), v(c_ih))$, at the quadrature nodes $c_i$ of the solution $(q(t),v(t))$ of the forced Euler--Lagrange boundary-value problem. Then the associated discrete Lagrangian given by
\begin{equation*}
L_d(q_0,q_1;h) = h\sum_{i=0}^n b_iL(q^i,v^i) \ ,
\end{equation*}
and the discrete forces given by
\begin{align*}
f_d^+(q_0,q_1;h) &= h\sum_{i=0}^n b_if(q^i,v^i)\frac{\partial q^i}{\partial q_1},\\
f_d^-(q_0,q_1;h) &= h\sum_{i=0}^n b_if(q^i,v^i)\frac{\partial q^i}{\partial q_0},
\end{align*}
all have order of accuracy $\min(p+1,q)$.
\label{recipe}
\end{thm}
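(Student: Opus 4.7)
The plan is to split the error $L_d - L_d^E$ into a \emph{quadrature error} and a \emph{boundary-value-solver error}, and bound each separately. Write
\begin{equation*}
L_d(q_0,q_1;h) - L_d^E(q_0,q_1;h) = \underbrace{h\sum_{i=0}^n b_i\bigl[L(q^i,v^i) - L(q_{01}(c_ih),\dot q_{01}(c_ih))\bigr]}_{\text{(I): BVP error}} + \underbrace{h\sum_{i=0}^n b_iL(q_{01}(c_ih),\dot q_{01}(c_ih)) - \int_0^h L(q_{01}(t),\dot q_{01}(t))\,dt}_{\text{(II): quadrature error}}.
\end{equation*}
For (I), Lipschitz continuity of $L$ gives $|L(q^i,v^i)-L(q_{01}(c_ih),\dot q_{01}(c_ih))| \leq \text{Lip}(L)\,\|(q^i,v^i)-(q_{01}(c_ih),\dot q_{01}(c_ih))\|$, which by the assumed $p$-th order accuracy of the BVP solver is $O(h^{p+1})$; multiplying by the outer factor of $h$ and summing with the fixed weights $b_i$ yields a bound of $O(h^{p+2})$. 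For (II), a $q$-th order quadrature rule applied to the smooth integrand $t\mapsto L(q_{01}(t),\dot q_{01}(t))$ on an interval of length $h$ produces the standard error $O(h^{q+1})$. Combining the two gives $L_d - L_d^E = O(h^{\min(p+2,q+1)}) = O(h^{\min(p+1,q)+1})$, which is the asserted order $\min(p+1,q)$ for $L_d$.

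The argument for $f_d^\pm$ follows the same template, decomposing $f_d^+ - f_d^{E+}$ into a BVP piece and a quadrature piece. The quadrature piece is again $O(h^{q+1})$ directly by the order of the rule. The BVP piece now requires controlling the difference $f_L(q^i,v^i)\,\partial q^i/\partial q_1 - f_L(q_{01}(c_ih),\dot q_{01}(c_ih))\,\partial q_{01}(c_ih)/\partial q_1$. I would split this further as $[f_L(q^i,v^i)-f_L(q_{01}(c_ih),\dot q_{01}(c_ih))]\,\partial q^i/\partial q_1$ plus $f_L(q_{01}(c_ih),\dot q_{01}(c_ih))\,[\partial q^i/\partial q_1 - \partial q_{01}(c_ih)/\partial q_1]$; the first summand is $O(h^{p+1})$ by Lipschitz continuity of $f_L$ exactly as in (I), and the second summand is also $O(h^{p+1})$ provided the sensitivities produced by the BVP solver (obtained by differentiating its output with respect to the endpoint data $q_1$) inherit $p$-th order accuracy. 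Together with the factor of $h$ and bounded weights, this yields the same $O(h^{\min(p+1,q)+1})$ bound.

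The main obstacle is the control of the endpoint sensitivity $\partial q^i/\partial q_1$. One-step BVP solvers of standard type (collocation, shooting with Newton iteration, etc.) do in fact produce sensitivities of the same order as the state approximation, either because the solver is the implicit function obtained by root-finding on a smooth residual whose Jacobian is bounded below, or because differentiating the discrete scheme with respect to endpoint data yields a variational equation that is itself a $p$-th order discretization of the continuous variational equation satisfied by $\partial q_{01}/\partial q_1$. I would invoke this as a structural property of the class of BVP methods under consideration rather than prove it from scratch.

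Finally, to obtain the \emph{simultaneous} order-$r$ hypothesis required by Theorem~\ref{orderthm}, I would note that all of the Lipschitz constants, quadrature coefficients, and BVP error constants in the bounds above depend only on uniform bounds for $L$, $f_L$, and $q_{01}(t)$ over a common compact set of initial data; hence a single open $U$ with compact closure and a common $h_0$ suffice for the $L_d$ and $f_d^\pm$ bounds simultaneously, completing the proof.
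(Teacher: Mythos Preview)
Your proposal is correct and follows essentially the same approach as the paper: split the error into a quadrature contribution of order $\mathcal{O}(h^{q+1})$ and a BVP-solver contribution of order $\mathcal{O}(h^{p+2})$, using Lipschitz continuity to pass the $\mathcal{O}(h^{p+1})$ state error through $L$ and $f$. The paper's proof is actually less careful than yours on the point you flag as the main obstacle: it simply writes $\partial q_{01}(c_ih)/\partial q_1 = \partial q^i/\partial q_1 + \mathcal{O}(h^{p+1})$ without comment, whereas you correctly identify that this sensitivity estimate requires its own justification and sketch how it follows from the structure of standard BVP solvers.
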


\begin{proof}
The result follows from a few straightforward calculations. We begin with $L_d$.
\begin{align*}
L_d^E(q_0,q_1,h) &= \int_0^h L(q_{01}(t),\dot{q}_{01}(t)) \ dt \\
&= \bigg[h\sum_{i=1}^m b_i L(q_{01}(c_ih),\dot{q}_{01}(c_i h))\bigg] + \mathcal{O}(h^{q+1}) \\
&= \bigg[h\sum_{i=1}^m b_i L(q^i + \mathcal{O}(h^{p+1}), v^i + \mathcal{O}(h^{p+1}))\bigg] + \mathcal{O}(h^{q+1}) \\
&= h\sum_{i=1}^m b_i L(q^i,v^i) + h\sum_{i=1}^m b_i \mathcal{O}(h^{p+1}) + \mathcal{O}(h^{q+1}) \\
&=  L_d(q_0,q_1,h) + \mathcal{O}(h^{p+2}) + \mathcal{O}(h^{q+1}) \ .
\end{align*}
For $f_d^+$,
\begin{align*}
f_d^{E+}(q_0,q_1,h) &= \int_0^h f(q_{01}(t),\dot{q}_{01}(t))\cdot\frac{\partial q_{01}(t)}{\partial q_1} \ dt \\
&= \bigg[h\sum_{i=1}^m b_i f(q_{01}(c_ih),v_{01}(c_ih)) \cdot \frac{\partial q_{01}(c_ih)}{\partial q_1} \bigg] + \mathcal{O}(h^{q+1}) \\
&=\bigg[h\sum_{i=1}^m b_i f(q^i + \mathcal{O}(h^{p+1}), v^i + \mathcal{O}(h^{p+1})) \cdot \bigg( \frac{\partial q^i}{\partial q_1} + \mathcal{O}(h^{p+1})\bigg)\bigg]+ \mathcal{O}(h^{q+1}) \\
%& \\
&=\bigg[h\sum_{i=1}^m b_i\bigg(f(q^i, v^i) + \mathcal{O}(h^{p+1})\bigg)\cdot\bigg(\frac{\partial q^i}{\partial q_1} + \mathcal{O}(h^{p+1})\bigg)\bigg] + \mathcal{O}(h^{q+1}) \\
&= h\sum_{i=1}^m b_i f(q^i,v^i)\cdot \frac{\partial q^i}{\partial q_1} + \mathcal{O}(h^{p+2}) + \mathcal{O}(h^{2p+3}) + \mathcal{O}(h^{q+1}) \\
&=  f_d^+(q_0,q_1,h) + \mathcal{O}(h^{p+2}) + \mathcal{O}(h^{q+1}).
\end{align*}
A similar calculation shows that
\begin{align*}
f_d^{E-}(q_0,q_1,h) &=  f_d^-(q_0,q_1,h) + \mathcal{O}(h^{p+2})+ \mathcal{O}(h^{q+1}) \ .
\end{align*}
\end{proof}

Taken together, Theorems \ref{orderthm} and \ref{recipe} show that we can construct forced variational integrators of known order from a choice of a quadrature rule and a boundary-value solution method. In fact, we could choose up to three different quadrature rules and boundary-value solution methods to define $L_d$, $f_d^+$, and $f_d^-$. The proofs above would still apply, and the resulting integrator would have order min($p_1+1,p_2+1,p_3+1, q_1,q_2,q_3)$ for $p_i$ the orders of the boundary-value solutions and $q_i$ the orders of the quadrature rules. However, this produces integrators with unpredictable results, as discussed in the next section.

\subsection{Notions of equivalence and equivalence-preservation}

A forced Lagrangian system has equations of motion
\begin{equation*}
\pder{L}{q}(q,\dot{q}) - \der{}{t}\left(\pder{L}{\dot{q}}(q,\dot{q})\right) + f_L(q,\dot{q}) = 0.
\end{equation*}
Since the equations of motion are defined by the combination of $L$ and $f_L$, it is possible for pairs $(L^1,f_L^1)$ and $(L^2,f_L^2)$ with $L^1\neq L^2$ and $f_L^1\neq f_L^2$ to define the same equations of motion. We refer to this as \emph{equivalence} of the pairs $(L^1,f_L^1)$ and $(L^2,f_L^2)$. Most numerical methods simulate mechanics by numerically integrating the differential equations of motion, so they produce the same numerical approximation whether the motion was originally described using $(L^1, f_L^1)$ or $(L^2,f_L^2)$.

Variational integrators simulate mechanics using the discrete equations of motion defined by a particular choice of $L_d$ and $f_d^\pm$. Thus, a variational integrator is defined by the choice of discretizations $L\mapsto L_d$ and $f_L \mapsto f_d^\pm$. It is therefore natural to ask which discretization schemes for the discrete Lagrangian and discrete forces when applied to equivalent representations of forced Euler--Lagrange systems lead to equivalent discrete equations of motion. This reflects whether or not the resulting variational integrator is well-defined on the equivalence class of representations of forced Euler--Lagrange systems.

More explicitly, given two equivalent representations $(L^1,f_L^1)$ and $(L^2,f_L^2)$ of a forced Euler--Lagrange system, the resulting solution trajectories are the same. Since a variational integrator for a forced Euler--Lagrange system aims to approximate that solution, it is desirable to consider well-defined variational integrators that produce the same approximation irrespective of which of the two equivalent representations $(L^1,f_L^1)$ or $(L^2,f_L^2)$ it is applied to.
%A variational integrator intends to approximate continuous motion, and equivalent pairs $(L^1,f^1)$ and $(L^2,f^2)$ define the same motion. Thus, a well-defined variational integrator should produce the same approximation whether starting from $(L^1,f^1)$ or $(L^2,f^2)$.
Well-definedness in this sense will be important in interconnected applications where we intentionally alter the forced representation of a system to view it as a collection of interacting subsystems. In particular, if we considered a system with many components in terms of the constituent free-body diagrams, it is essential when combining the free-body diagrams for the internal forces to cancel out in order to recover the original system.

We discuss the implications of equivalence at the continuous and discrete levels. Then, we provide a simple criterion for constructing well-defined forced variational integrators.

\subsubsection{Continuous equivalence}
Suppose we have two canonical Lagrangians, $L^i = \frac{1}{2}\dot{q}^TM^i\dot{q} - V^i(q)$. Then, equality of the equations of motion implies
\begin{equation*}
-\nabla V^1(q) + M^1\ddot{q} +f_L^1(q,\dot{q}) = -\nabla V^2(q) + M^2\ddot{q} +f_L^2(q,\dot{q}) \ ,
\end{equation*}
so that
\begin{equation*}
f_L^1(q,\dot{q}) - f_L^2(q,\dot{q}) = \nabla(V^2-V^1)(q) + (M^2-M^1)\ddot{q}.
\end{equation*}
Comparing variables on each side, we conclude that $M^2 = M^1$ and $f_L^1(q,v)-f_L^2(q,v) = \nabla V^1(q)-\nabla V^2(q)$.

More generally, assume two Lagrangians of the form $L^i(q,v) = K(q,v)-V^i(q) = \frac{1}{2}g(v,v)-V^i(q)$ for some metric tensor $g$. Then, continuous equivalence again implies $f_L^1(q,v)-f_L^2(q,v) = \nabla V^2(q)-\nabla V^1(q)$.

\subsubsection{Notions of discrete equivalence.}
In \cite{MaWe2001}, the authors define \emph{strongly equivalent} discrete Lagrangians to be those that generate equivalent discrete Hamiltonian maps and \emph{weakly equivalent} discrete Lagrangians to be those that generate equivalent discrete Lagrangian maps. That is to say that strongly equivalent discrete Lagrangians will generate the same discrete sequence $\{(q_k, p_k)\}_{k=0}^N\subset T^*Q$, whereas weakly equivalent discrete Lagrangians will only generate the same sequence $\{q_k\}_{k=0}^N\subset Q$.

We can define strong and weak equivalence of discrete triples $(L_d^i, f_d^{i\pm})$ in the same way. From diagram \eqref{three_triangles}, we see that we have strong equivalence if and only if we have equivalence of the forced discrete Legendre transforms, $\mathbb{F}^{f\pm}L_d$, just as in the unforced case. Then, Theorem~\ref{orderthm} still holds for triples which are strongly equivalent to a triple meeting the assumptions. Moreover, Dirac variational integrators directly generalize the discrete Hamiltonian implementation of forced variational integrators. Thus, we focus on strong equivalence.

Strongly equivalent discrete triples $(L_d^1, f_d^{1\pm})$ and $(L_d^2, f_d^{2\pm})$ have equivalent forced discrete Legendre transforms, i.e. $\mathbb{F}^{f\pm}L_d^1 = \mathbb{F}^{f\pm}L_d^2$.  This implies that
\begin{equation*}
f_d^{2-}(q_k,q_{k+1}) - f_d^{1-}(q_k,q_{k+1}) = D_1L_d^1(q_k,q_{k+1})-D_1L_d^2(q_k,q_{k+1}),
\end{equation*}
and
\begin{equation*}
f_d^{1+}(q_k,q_{k+1}) - f_d^{2+}(q_k,q_{k+1}) = D_2L_d^2(q_k,q_{k+1}) - D_2L_d^1(q_k,q_{k+1}).
\end{equation*}
So each difference in discrete forcing in two strongly equivalent triples must be integrable in at least one of the position variables. If we think of $f_d^\pm$ as right and left discrete forces, then each discrete force difference must be integrable in its base-point variable, mirroring the continuous conclusion that $f_L^1(q,v)-f_L^2(q,v) = \nabla V^2(q)-\nabla V^1(q)$.

\subsubsection{Preserving continuous equivalence}
Theorems \ref{orderthm} and \ref{recipe} provide a means of constructing forced variational integrators of known order by choosing quadrature rules and boundary-value solution methods. It is tempting to try to optimize the overall discretization by tailoring the discretization of $L_d$, $f_d^+$, and $f_d^-$ individually, but the resulting integrator is no longer well-defined with respect to the equivalence relation defined above. The simplest way to generate an integrator that preserves continuous equivalence by our method is to choose the same quadrature rule and boundary-value solution method for all three discrete quantities.

In this case, we have
\begin{equation*}
L_d(q_0,q_1;h) = h\sum_{i=0}^n b_iL(q^i,v^i) \ ,
\end{equation*}
and
\begin{align*}
f_d^+(q_0,q_1;h) &= h\sum_{i=0}^n b_i f_L(q^i,v^i)\frac{\partial q^i}{\partial q_1}\\
f_d^-(q_0,q_1;h) &= h\sum_{i=0}^n b_i f_L(q^i,v^i)\frac{\partial q^i}{\partial q_0}.
\end{align*}
Suppose $(L^1,f_L^1)$ and $(L^2,f_L^2)$ are equivalent, so
\[f_L^1(q,\dot{q})-f_L^2(q,\dot{q}) = \nabla V^1(q) - \nabla V^2(q).\]
Constructing $(L_d^1, f_d^{1\pm})$ and $(L_d^2,f_d^{2\pm})$ from $(L^1, f_L^1)$ and $(L^2,f_L^2)$, we then have
\begin{align*}
f_d^{1+}(q_0,q_1;h) - f_d^{2+}(q_0,q_1;h) &= h\sum_{i=0}^n b_i (f_L^1(q^i,v^i)-f_L^2(q^i,v^i))\pder{q^i}{q_1} \\
&= h\sum_{i=0}^n b_i (\nabla V^2(q^i)-\nabla V^1(q^i))\pder{q^i}{q_1} \\
&= D_2L_d^2(q_0,q_1;h) - D_2L_d^1(q_0,q_1;h).
\end{align*}
Similarly, $f_d^{2-}(q_0,q_1;h) - f_d^{1-}(q_0,q_1;h) = D_1L_d^1(q_0,q_1;h) - D_1L_d^2(q_0,q_1;h)$. Thus, $(L_d^1, f_d^{1\pm})$ and $(L_d^2,f_d^{2\pm})$ are strongly equivalent.

\subsection{Numerical examples}
We construct our implementations by choosing a quadrature rule and a one-step map, using shooting to solve the boundary-value problem, which is analogous to the shooting-based variational integrator proposed in \cite{LeSh2011b}. Quadrature of order $q$ and a one-step map of order $p$ produce an integrator of order min($p,q$) since the shooting solution is order $p-1$.

Table \ref{OrderTable} shows this construction converges as predicted. The second-order integrator uses trapezoidal quadrature and a second-order Taylor's method shooting solution. The fourth-order integrator uses Simpson's Rule quadrature and a fourth-order Taylor's method shooting solution. Both were run on a damped harmonic oscillator with mass $m=1$, spring constant $k=1$, and damping coefficient $c=0.01$. The table shows the effect of doubling the number of time steps per period $T$ on the error after five periods. In both cases, the error decreases at the predicted rate.

\begin{table}
\begin{tabular}{| r | c | c | c | c |}
\hline
Steps per period & 20 & 40 & 80 & 160 \\
\hline
$2^{nd}$ order error & 0.2275 & 0.0557 & 0.0138 & 0.0035 \\
 at $t=5T$ & & & & \\
\hline
error ratio & -- & 4.0844 & 4.0362 & 3.9429 \\
\hline
$4^{th}$ order error & 0.6551$\times 10^{-4}$ & 0.0516$\times 10^{-4}$ & 0.0034$\times 10^{-4}$ & 0.0002$\times 10^{-4}$ \\
 at $t=5T$ & & & & \\
\hline
error ratio & -- & 12.696 & 15.176 & 17  \\
\hline
\end{tabular}
\caption[Fourth and second-order forced integrators show the expected convergence rates.]{Fourth and second-order forced integrators show the expected convergence rates. The second-order integrator uses Trapezoidal quadrature and second-order Taylor shooting. The fourth-order integrator uses Simpson's Rule quadrature and fourth-order Taylor shooting. Both methods simulate a damped harmonic oscillator with $m = k = 1$ and damping coefficient $c = 0.01$.}
\label{OrderTable}
\end{table}

Figure \ref{SecondOrderWD} illustrates the unpredictability of the results of non-equivalence-preserving discretizations. Two implementations use second-order Taylor's method shooting as the boundary-value solution method for both $L_d$ and $f_d^\pm$ with a Trapezoidal and/or Midpoint quadrature. The equivalence-preserving discretization uses Trapezoidal quadrature on both $L_d$ and $f_d^\pm$. The non-equivalence-preserving implementation uses a Trapezoidal quadrature on $L_d$ and Midpoint quadrature on $f_d^\pm$. Both integrators were run on a conservative harmonic oscillator eleven times, with the potential force increasingly represented as an external force, i.e.
\begin{equation*}
L(q,v) = \frac{1}{2}v^TMv - (1-\alpha)V(q) \ , \ f(q,v) = -\alpha \nabla V(q)
\end{equation*}
for $\alpha = 0,0.1,0.2\dots,1$. The step size was $h=0.05$ for all runs. All values of $\alpha$ produce the same continuous equations of motion. The equivalence-preserving discretization produces the same solution regardless of $\alpha$. The non-equivalence-preserving discretization produces $\alpha$-dependent solutions which veer away from the unforced representation's solution as $\alpha$ increase. We also compared a Midpoint-Midpoint construction to a Midpoint-Trazezoidal construction in the same way. The results lie directly atop the Trap-Trap vs. Trap-Mid results shown. This unpredictability of results makes non-equivalence-preserving integrators unsuited for applications like interconnected systems in which a system's forced representation will purposely be modified by modeling the component subsystems individually, with internal forces to account for the interaction with other subsystems.

\begin{figure}
\includegraphics[scale=0.6]{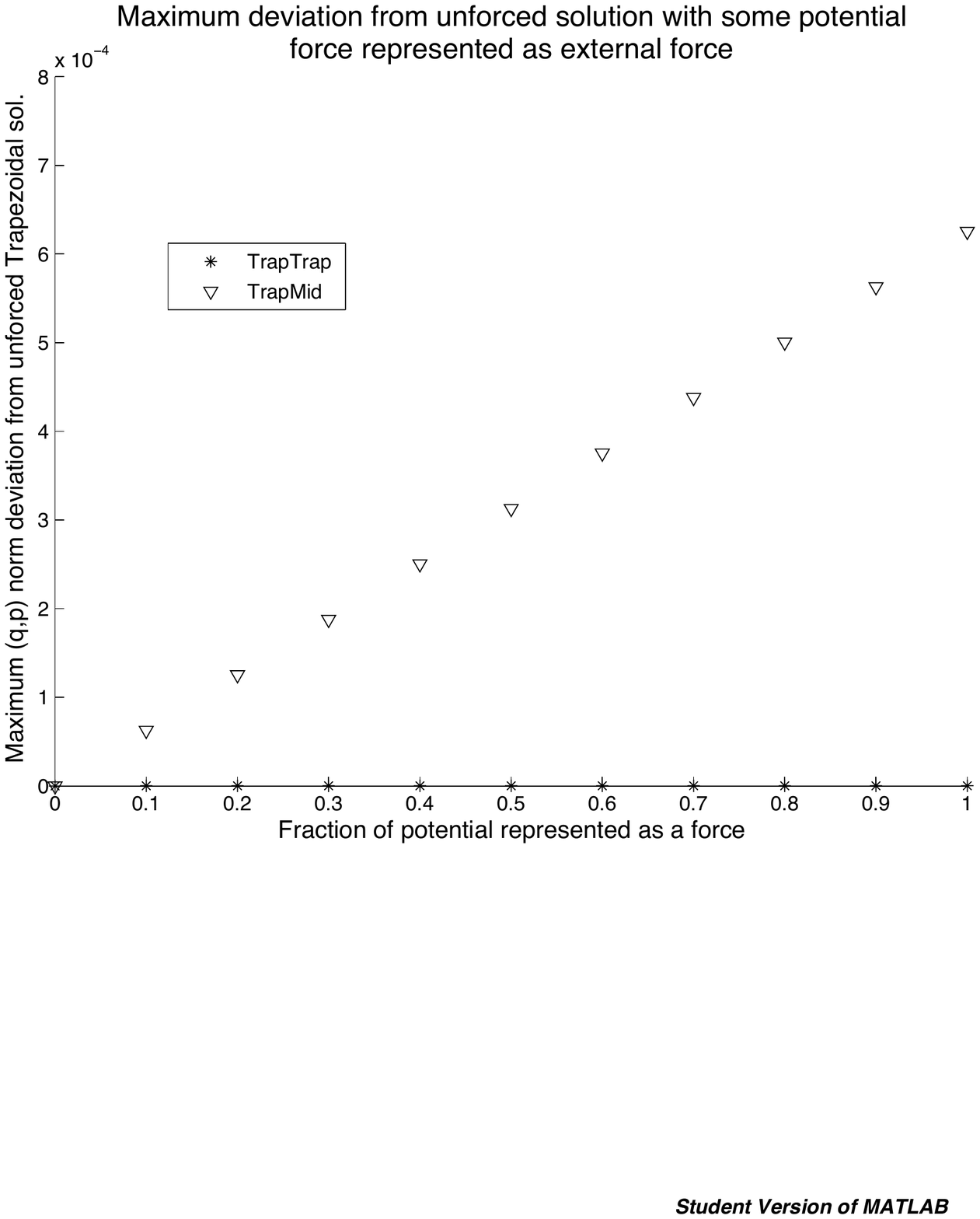}
\caption[Unpredictability of the results of non-equivalence-preserving discretizations.]{As the system's potential is increasingly represented as an external force, the non-equivalence-preserving construction produces solutions that veer further and further from the solution of an unforced representation. The equivalence-preserving construction produces the same solution whether the potential force is represented entirely through a potential within the Lagrangian or entirely as an external force.}
\label{SecondOrderWD}
\end{figure}

Figure \ref{MassivelyAwry} shows a different example where failure to preserve equivalence produces wildly unpredictable results. We constructed a shooting based variational integrator with fourth-order Runge--Kutta as the underlying one-step method, and used the trapezoidal rule on $L_d$ and $f_d^\pm$ to construct a second-order equivalence-preserving integrator, and Simpson's rule on $L_d$ and trapezoidal rule on $f_d^\pm$ to construct a second-order non-equivalence-preserving integrator. Both of these methods were applied to simulate a system whose equations of motion simplify to an unforced harmonic oscillator. We inserted an artificial potential of $100q^5$, then canceled that potential force with a forcing function. As can be seen from the figure, the non-equivalence-preserving formulation fails to capture the motion and exhibits a qualitatively different behavior. The example is artificial, but it highlights the need to preserve equivalence in developing a well-defined integrator.

\begin{figure}
\includegraphics[scale=0.6]{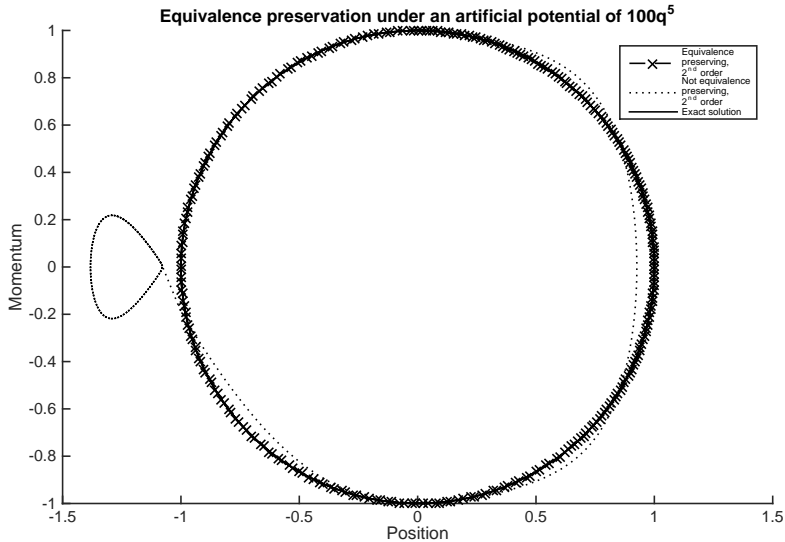}
\caption[Another example where failure to preserve equivalence produces wildly unpredictable results.]{Two integrators simulate a system whose equations of motion simplify to an undamped harmonic oscillator. An artificial potential of 100$q^5$ was added to the Lagrangian, with the artificial potential force canceled by an external force. The $2^{nd}$ order equivalence-preserving discretization is indistinguishable from the exact solution, while the $2^{nd}$ order non-equivalence-preserving discretization veers wildly away.}
\label{MassivelyAwry}
\end{figure}

\section{Dirac variational integrators with forces}
We now lay out the generalization of Dirac variational integrators to include forces. We begin from the variational perspective, where we can directly apply the ideas of forced variational integrators. We then formulate the resulting integrators from the perspective of discrete Dirac structures and finish with a basic numerical implementation.

\subsection{Variational formulation}

\subsubsection{(+)-discrete Dirac mechanics}
In \cite{LeOh2011}, the discrete variational principle for a Dirac system without external forces is the (+)-discrete Lagrange-d'Alembert-Pontryagin principle, given by
\begin{equation*}
\delta \sum_{k=0}^{N-1}[L_d(q_k,q_k^+) + p_{k+1}(q_{k+1} - q_k^+)] = 0,
\end{equation*}
where the variations vanish at the endpoints and are constrained to lie on a constraint distribution, and pairs of consecutive points in the discrete solution lie in a discrete constraint distribution.
We define the forced principle in direct analogy to \eqref{forcedprinc} as
\begin{equation}
\delta \sum_{k=0}^{N-1}[L_d(q_k,q_k^+) + p_{k+1}(q_{k+1} - q_k^+)] + \sum_{k=0}^{N-1}[f_d^-(q_k, q_{k}^+)\cdot \delta q_k + f_d^+(q_k, q_{k}^+)\cdot \delta q_{k}^+] = 0,
\label{pDiracForcedPrinc}
\end{equation}
where the second sum represents the virtual work associated with the external forces.
We have chosen the arguments of $f_d^\pm$ to match those of $L_d$, and we constrain the variations in the same way as \cite{LeOh2011}. Thus, $\delta p_k$ and $\delta q_k^+$ are arbitrary and $\delta q_0 = \delta q_n = 0$. We impose $\delta q_k\in \Delta_Q(q_k)$ on the remaining $\delta q_k$ after taking variations inside the sum, and we insist that $(q_k, q_{k+1})\in \Delta_Q^{d+}$. For the forced principle \eqref{pDiracForcedPrinc}, these constraints yield
%\begin{subequations}
\begin{align*}
0&=\omega_{d+}^a(q_k, q_{k+1}), \\ 
q_{k+1} &= q_k^+,\\
p_{k+1} &= D_2L_d(q_k,q_k^+) + f_d^+(q_k, q_k^+), \\
p_k&= - D_1L_d(q_k,q_k^+) - f_d^-(q_k, q_k^+) + \mu_a\omega^a(q_k).
\end{align*}
%\end{subequations}
These equations recover the (+)-discrete Lagrange--Dirac equations of \cite{LeOh2011} when $f_d^\pm = 0$ and the forced discrete Euler--Lagrange equations \cite{MaWe2001} in the absence of constraints. When both the forces and constraints are zero, we recover the classic discrete Euler--Lagrange equations.

\subsubsection{($-$)-discrete Dirac mechanics}
In \cite{LeOh2011}, ($-$)-discrete Dirac mechanics derives from the ($-$)-discrete Lagrange-d'Alembert principle,
\begin{equation*}
\delta \sum_{k=0}^{N-1}[L_d(q_{k+1}^-,q_{k+1})-p_k(q_k-q_{k+1}^-)] = 0,
\end{equation*}
with constraints on the variations, and the pairs of sequential points as before.
We add forces to this principle as
\begin{equation}
\begin{aligned}
\delta \sum_{k=0}^{N-1}&[L_d(q_{k+1}^-,q_{k+1})-p_k(q_k-q_{k+1}^-)]\\
&+ \sum_{k=0}^{N-1}[f_d^-(q_{k+1}^-, q_{k+1})\cdot \delta q_{k+1}^- + f_d^+(q_{k+1}^-, q_{k+1})\cdot \delta q_{k+1}] = 0,
\label{mforcedprinc}
\end{aligned}
\end{equation}
again choosing the arguments of $f_d^\pm$ to match those of $L_d$ and constraining the variations as in \cite{LeOh2011}. In this case, then, $\delta p_k$ and $\delta q_k^-$ are arbitrary and $\delta q_0 = \delta q_N = 0$. We impose $\delta q_k \in \Delta_Q(q_k)$ on the remaining $\delta q_k$ after taking variations inside the sums, and we insist that $(q_k, q_{k+1})\in \Delta_Q^{d-}$. With these conditions, computing variations of the principle \eqref{mforcedprinc} yields
%\begin{subequations}
\begin{align*}
0&=\omega_{d-}^a(q_k, q_{k+1}),\\
q_k &= q_{k+1}^-, \\
p_k &= -D_1L_d(q_{k+1}^-,q_{k+1}) - f_d^-(q_{k+1}^-,q_{k+1}), \\
p_{k+1}&= D_2L_d(q_{k+1}^-,q_{k+1}) + f_d^+(q_{k+1}^-, q_{k+1}) + \mu_a\omega^a(q_{k+1}).
\end{align*}
%\end{subequations}
In the unforced case, these equations recover the ($-$)-discrete Dirac equations of \cite{LeOh2011}. In the unconstrained case, we recover the forced discrete Euler--Lagrange equations of \cite{MaWe2001}. In the absence of both we again recover the classic discrete Euler--Lagrange equations.

\subsection{Discrete Dirac structure formulation}
The dynamics of an unforced, continuous Lagrange--Dirac dynamical system can be expressed in terms of Dirac structures as $(X, \mathfrak{D}L) \in D$, where $X$ is the partial vector field of the motion, $\mathfrak{D}L$ the Dirac differential of the Lagrangian $L$ and $D$ a Dirac structure on $T^*Q$. To accomodate forces in the continuous case, the force $F : TQ \to T^*Q$ is lifted to a map $\tilde{F} : TQ \to T^*T^*Q$ such that
\begin{equation*}
\langle \tilde{F}(q,v),w\rangle = \langle F(q,v), T\pi_Q(w) \rangle \ .
\end{equation*}
Locally, $\tilde{F}$ is given by $\tilde{F}(q,v) = (q,p, F(q,v),0)$. The forced Lagrange--Dirac dynamical system is then given by
\begin{equation}
(X, \mathfrak{D}L(q,v) - \tilde{F}(q,v)) \in D_{\Delta_Q}(q,p) \ .
\label{ctsForcedDiracStruct}
\end{equation}

Without forces, we can express (+) and ($-$)-discrete Dirac mechanics in terms of discrete Dirac structures as 
\begin{equation*}
(X_d^k, \mathfrak{D}^+L_d(q_k, q_k^+))\in D_{\Delta Q}^{d+},
\end{equation*}
and
\begin{equation*}
(X_d^k, \mathfrak{D}^-L_d(q_{k+1}^-, q_{k+1}))\in D_{\Delta Q}^{d-},
\end{equation*}
which amount to the conditions
%\begin{subequations}
\begin{align*}
(q_k,D_2L_d,-D_1L_d,q_k^+)& - (q_k,p_{k+1},p_k,q_{k+1})\in \Delta_{Q\times Q^*}^\circ , \\
(-D_1L_d, q_{k+1}, -q_{k+1}^-, -D_2L_d)& - (p_k, q_{k+1}, -q_k, -p_{k+1}) \in \Delta_{Q^*\times Q}^\circ .
\end{align*}
%\end{subequations}

To introduce forces into these expressions in a manner that is consistent with the discrete equations of motion that are derived variationally, we require
\begin{subequations}\label{ForceConds}
\begin{align}
(q_k,D_2L_d+f_d^+,-D_1L_d-f_d^-,q_k^+)& - (q_k,p_{k+1},p_k,q_{k+1})\in \Delta_{Q\times Q^*}^\circ , \\
(-D_1L_d-f_d^-, q_{k+1}, -q_{k+1}^-, -D_2L_d-f_d^+)& - (p_k, q_{k+1}, -q_k, -p_{k+1}) \in \Delta_{Q^*\times Q}^\circ .
\end{align}
\end{subequations}
To this end, define $\tilde{F}_{d+}$ and $\tilde{F}_{d-}$ as
\begin{align*}
\tilde{F}_{d+}(q_k, q_k^+) &= (0, -f_d^+(q_k, q_k^+), f_d^-(q_k, q_k^+), 0), \\
\tilde{F}_{d-}(q_{k+1}^-,q_{k+1}) &= (f_d^-(q_{k+1}^-,q_{k+1}), 0 , 0, f_d^+(q_{k+1}^-, q_{k+1})).
\end{align*}
Then, we can write conditions \eqref{ForceConds} as
\begin{subequations}
\label{forcedDiracStructs}
\begin{align}
(X_d^k, \mathfrak{D}^+L_d(q_k, q_k^+)-\tilde{F}_{d+}(q_k,q_k^+))\in D_{\Delta Q}^{d+}\\
(X_d^k, \mathfrak{D}^-L_d(q_{k+1}^-, q_{k+1})-\tilde{F}_{d-}(q_{k+1}^-,q_{k+1}))\in D_{\Delta Q}^{d-}.
\end{align}
\end{subequations}
mimicking the continuous expression \eqref{ctsForcedDiracStruct}. In \eqref{forcedDiracStructs} we regard $\mathfrak{D}^\pm L_d - \tilde{F}_{d\pm}$ as an abstract expression and impose base-point matching after the subtraction.

\subsection{Numerical Example}
We close with a simple implementation of the (+) forced discrete Dirac equations. We simulate a basic RLC resonator with a single resistor, inductor, and capacitor. We assume some existing voltage in the system but no replenishing voltage source.
\[
\begin{circuitikz}
      \draw (0,0)
      to(0,2)
      to[L=$L$] (4,2)
      to[R=$R$] (4,0)
      to[C=$C$] (0,0);
   \end{circuitikz}
\]
The system is described as a Lagrange--Dirac system in terms of the charges and currents in each component, $(q^C, q^L, q^R, i^C, i^L, i^R)$. Let $L, R,$ and $C$ denote the inductance, resistance, and capacitance of the components. Then the system has Lagrangian function
\begin{equation*}
L(q,i) = \frac{L}{2}(i^L)^2 - \frac{(q^C)^2}{2C}
\end{equation*}
and force
\begin{equation*}
f(q,i) = -i^R R.
\end{equation*}
Kirchhoff's current law imposes the following constraints,
\begin{align*}
i^L-i^R = 0,\\
i^R-i^C = 0.
\end{align*}
Thus, our constraints can be described in terms of the annihilator distribution $\Delta_Q^\circ = span\{\omega^1,\omega^2\}$ for $\omega^1 = dq^L - dq^R$ and $\omega^2 = dq^R-dq^C$. We follow the discretization in \cite{LeOh2011} to construct $\omega_{d+}^{1,2}$ as $\omega_{d+}^a(q_k,q_{k+1}) = \omega^a(q_k, \mathcal{R}_{q_k}^{-1}(q_{k+1}))$ for $\mathcal{R}_q(v) = q+vh$.

Existence of exact discrete quantities in the Dirac setting has not yet been established, so we cannot turn to them to guide the discretization for $L_d$ and $f_d^\pm$. Establishing such quantities is obviously desirable and a topic for future work. For now, we choose a simple $L_d$ and $f_d^\pm$ shown to work well in both the forced, non-Dirac setting \cite{MaWe2001} as well as the nonholonomic integrator setting \cite{CoMa2001}. Namely, we choose $L_d$ and $f_d^\pm$ to be
\begin{equation*}
L_d^{1/2}(q_k,q_k^+) = hL\left(\frac{q_k^+ + q_k}{2}, \frac{q_k^+ -q_k}{h}\right),
\end{equation*}
and
\begin{equation*}
f_d^{1/2\pm} = \frac{h}{2}f\left(\frac{q_k^+ + q_k}{2}, \frac{q_k^+ -q_k}{h}\right).
\end{equation*}
\cite{MaWe2001} introduces these forces as the natural complement to $L_d^{1/2}$. The discretizations $L_d^{1/2}$ and $f_d^{1/2\pm}$ both correspond to choosing midpoint quadrature and a linear boundary-value solution to approximate $L_d^E$ and $f_d^{E\pm}$ in the forced, non-Dirac setting. Thus, the set $(L_d^{1/2},f_d^{1/2\pm})$ is natural and equivalence-preserving by our earlier analysis as well.

Figure \ref{ForcedDiracFig} shows that the simulated charge in the capacitor over time (dotted line) is almost identical to the exact solution (solid line) using a time-step of $h = 0.05$ for 1000 time-steps.

\begin{figure}
\includegraphics[scale=0.6]{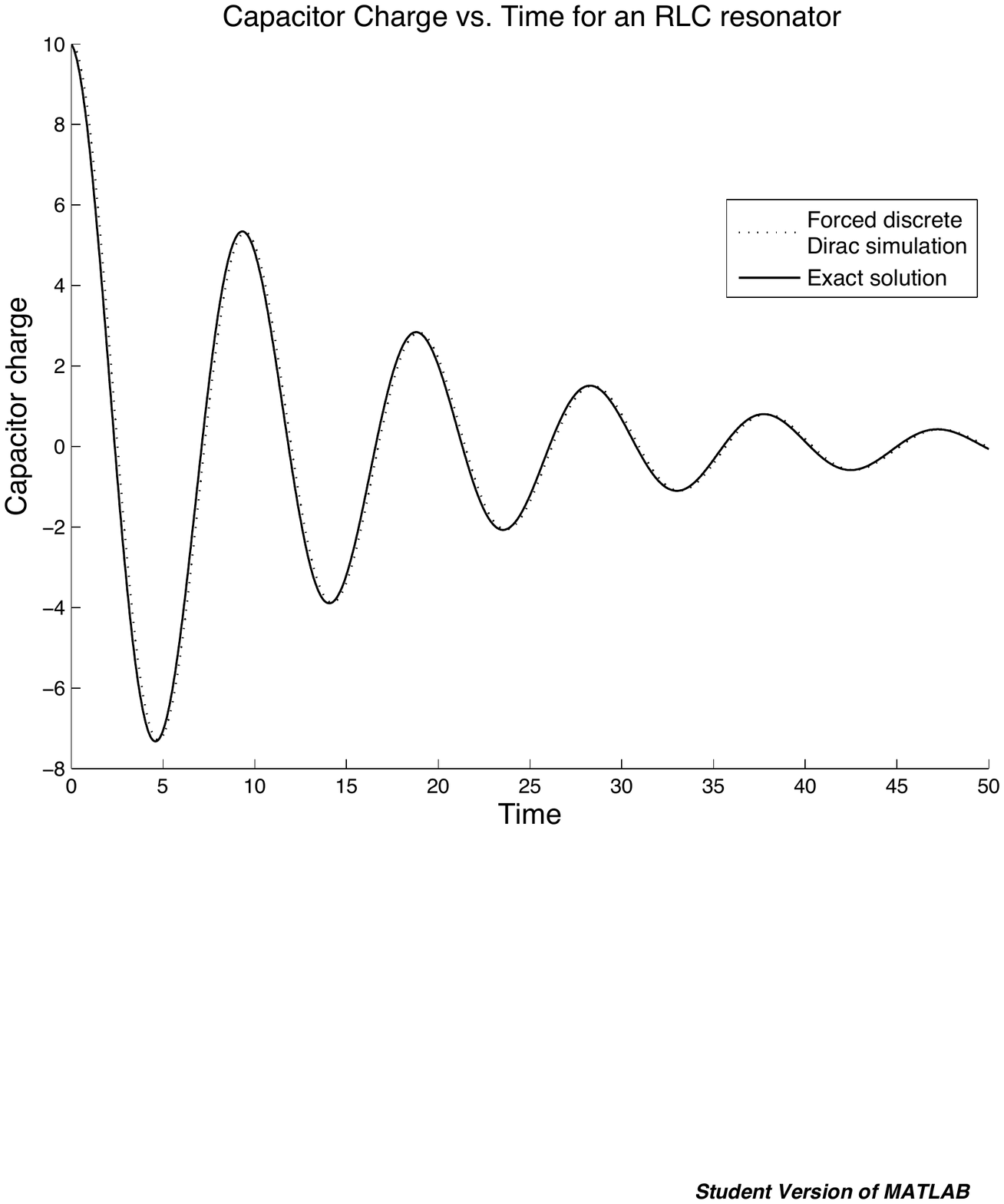}
\caption[A forced discrete Dirac integrator accurately simulates the decaying charge oscillations of a capacitor in an RLC resonator.]{A forced discrete Dirac integrator accurately simulates the decaying charge oscillations of a capacitor in an RLC resonator. The resonator consists of a single loop with one inductor ($L=0.75$), one capacitor ($C=3$), and one resistor ($R=0.1$). The step size is $h=0.05$.}
\label{ForcedDiracFig}
\end{figure}

\section{Conclusions and future work}

In this work we have successfully integrated the presence of external forcing into the discrete Dirac framework originally presented in \cite{LeOh2011} so that our forced discrete Dirac integrators recover the forced DEL integrators of \cite{MaWe2001}. We have also studied the effects of shifting forced representations on the accuracy of numerical simulations and presented a straightforward method for constructing integrators that avoid these spurious effects.

The work is motivated by the applications to the interconnection of discrete Dirac mechanics. Continuous interconnections can be equivalently represented as either constraints or interaction forces acting between subsystems. One would hope to obtain a similar equivalence at the discrete level. In recent work~\cite{PaLe2017}, we have derived discrete interconnections from the point of view of constraints. However, we do not yet have an equivalent representation in terms of discrete interaction forces. There is a lack of equivalence between forced and constrained representations throughout the discrete literature, though recent work has rectified this issue on vectors spaces through Hamel's formalism~\cite{BaZe2015}.

\section{Acknowledgements}

HP has been supported by the NSF Graduate Research Fellowship grant number DGE-1144086. ML has been supported in part by NSF under grants DMS-1010687, CMMI-1029445, DMS-1065972, CMMI-1334759, DMS-1411792, DMS-1345013.

\bibliographystyle{plainnat}
\bibliography{ThesisBib}

\end{document}